\newtheorem{postulate}{Hypothesis}[section]  
\newcommand{\bhypo}{\begin{postulate} \it}
\newcommand{\ehypo}{\end{postulate}}
\newtheorem{theorem}{Theorem}[section]  
\newcommand{\btheo}{\begin{theorem} \it}
\newcommand{\etheo}{\end{theorem}}
\newtheorem{proposition}{Proposition}[section]  
\newcommand{\bprop}{\begin{proposition}\it}
\newcommand{\eprop}{\end{proposition}}
\newtheorem{corollary}{Corollary}[section]  
\newcommand{\bcorol}{\begin{corollary} \it}
\newcommand{\ecorol}{\end{corollary}}
\newtheorem{lemma}{Lemma}[section]  
\newcommand{\blem}{\begin{lemma}\it}
\newcommand{\elem}{\end{lemma}}
\newtheorem{remark}{Remark}[section]   
\def\brem{\begin{remark} \rm }
\def\erem{\end{remark}}
\newtheorem{definition}{Definition}[section]   
\newcommand{\bdefi}{\begin{definition} \rm }
\newcommand{\edefi}{\end{definition}}
\date{}
\newcommand{\ba}{\begin{array}}
\newcommand{\ea}{\end{array}}
\newcommand{\bea}{\begin{eqnarray}}
\newcommand{\eea}{\end{eqnarray}}
\newcommand{\bead}{\begin{eqnarray*}}
\newcommand{\eead}{\end{eqnarray*}}
\newcommand{\be}{\begin{equation}}
\newcommand{\ee}{\end{equation}}
\newcommand{\bed}{\begin{displaymath}}
\newcommand{\eed}{\end{displaymath}}
\newcommand{\bl}{\begin{lemmma}}
\newcommand{\el}{\end{lemmma}}
\newcommand{\bp}{\begin{proposition}}
\newcommand{\ep}{\end{proposition}}
\newcommand{\bt}{\begin{theorem}}
\newcommand{\et}{\end{theorem}}
\newcommand{\br}{\begin{remark}}
\newcommand{\er}{\end{remark}}
\newcommand{\bc}{\begin{corollary}}
\newcommand{\ec}{\end{corollary}}
\newcommand{\bd}{\begin{definition}}
\newcommand{\ed}{\end{definition}}
\newenvironment{proof}%
{\begin{sloppypar}\noindent{\bf Proof }}%
{\hspace*{\fill}$\square$\end{sloppypar}}
\begin{document}

\begin{frontmatter}
%
%

\title{Quasi-Sectorial Contractions}
%

\author{Valentin A. Zagrebnov}
\address{Universit\'{e} de la M\'{e}diterran\'{e}e (Aix-Marseille II) and
Centre de Physique Th\'{e}orique - UMR 6207,  Luminy-Case 907, 13288 Marseille Cedex 9, France}
\ead{zagrebnov@cpt.univ-mrs.fr}

\begin{abstract}
We revise the notion of the \textit{quasi-sectorial} contractions.
Our main theorem establishes a relation between semigroups of \textit{quasi-sectorial} contractions and
a class of $m-${sectorial} generators. We discuss a relevance of this kind of contractions to
the theory of operator-norm approximations of strongly continuous semigroups.
\end{abstract}

\begin{keyword}
Operator numerical range; $m$-sectorial generators; contraction semigroups; quasi-sectorial
contractions; holomorphic semigroups; semigroup operator-norm approximations.
\PACS 47A55, 47D03, 81Q10
\end{keyword}

\end{frontmatter}

\section{Sectorial Operators}\label{sec1}
\setcounter{equation}{0}
\renewcommand{\theequation}{\arabic{section}.\arabic{equation}}

Let $\mathfrak{H}$ be a separable Hilbert space and let $T$ be a densely defined linear
operator with domain ${\rm{dom}}(T)\subset \mathfrak{H}$.
\begin{definition}\label{def-n-ran}
{\em The set of complex numbers:
$$
{\mathfrak{N}}(T) := \{(u,Tu)\in {{\mathbb{C}}}: u \in {\rm{dom}}(T), \
\|u\| = 1\} ,
$$
is called the \textit{numerical range} of the operator $T$.}
\end{definition}
\begin{remark}\label{rem-1-1}
{\rm{(}}a{\rm{)}} It is known that the set ${\mathfrak{N}}(T)$ is convex
{\rm{(}}the Toeplitz-Hausdorff theorem{\rm{)}}, and in general is
neither open nor closed, even for a closed operator $T$.\\
{\rm{(}}b{\rm{)}} Let $\Delta:= {{\mathbb{C}}} \setminus
\overline{{\mathfrak{N}}(T)}$ be complement of the numerical range
closure in the complex plane. Then $\Delta$ is a connected open set
except the special case, when $\overline{{\mathfrak{N}}(T)}$ is a
strip bounded by two parallel straight lines.
\end{remark}
Below we use some important properties of this set, see e.g. \cite[Ch.V]{Kato},
or \cite[Ch.1.6]{Zag}. Recall that ${\rm{dim}} ({\rm{ran}}(T))^{\bot}=:{\rm{def}}(T)$ is called a
\textit{deficiency} (or \textit{defect}) of a closed operator $T$ in $\mathfrak{H}$.
\begin{proposition}\label{prop-n-ran}
{\rm{(}}i{\rm{)}} Let $T$ be a closed operator in $\mathfrak{H}$. Then for any complex
number $z\notin \overline{{\mathfrak{N}}(T)}$, the operator $(T-z I)$ is
injective. Moreover, it has a closed range ${\rm{ran}}(T-z I)$ and
a constant deficiency ${\mathrm{def}}(T-z I)$ in each of connected component of
$\ {{\mathbb{C}}}\setminus{\overline{{\mathfrak{N}}(T)}}$.\\
{\rm{(}}ii{\rm{)}} If  ${\mathrm{def}}(T-z I) = 0$
for $z\notin \overline{{\mathfrak{N}}(T)}$, then $\Delta$ is a
subset of the resolvent set $\rho(T)$ of the operator $T$  and
\begin{equation}\label{estres}
\|(T-z I)^{-1}\| \leq {1\over {\rm{dist}}(z,\overline{{\mathfrak{N}}(T)})} \ \ .
\end{equation}
{\rm{(}}iii{\rm{)}} If ${\rm{dom}}(T)$ is dense and ${\mathfrak{N}}(T) \neq {\mathbb{C}}$, then
$T$ is closable, hence the adjoint operator $T^*$ is also densely defined.
\end{proposition}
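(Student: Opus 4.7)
The plan is to extract everything from the single numerical-range inequality
\begin{equation*}
\|(T-zI)u\|\;\ge\;|((T-zI)u,u)|\;=\;|(u,Tu)-z|\;\ge\;d(z):=\mathrm{dist}(z,\overline{\mathfrak{N}(T)}),
\end{equation*}
valid for $u\in\mathrm{dom}(T)$ with $\|u\|=1$ and $z\notin\overline{\mathfrak{N}(T)}$, which by homogeneity upgrades to $\|(T-zI)u\|\ge d(z)\,\|u\|$ on all of $\mathrm{dom}(T)$. This at once gives injectivity of $T-zI$ and will supply the resolvent estimate (\ref{estres}) of (ii) as soon as surjectivity is in hand. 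For closedness of $\mathrm{ran}(T-zI)$ in (i), I would run a short Cauchy argument: if $(T-zI)u_n\to w$, the lower bound forces $\{u_n\}$ to be Cauchy, so $u_n\to u$ and hence $Tu_n\to w+zu$; closedness of $T$ then places $u\in\mathrm{dom}(T)$ with $(T-zI)u=w$.

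To obtain constancy of $\mathrm{def}(T-zI)$ on each connected component of $\Delta$, I would prove it locally and invoke connectedness. Fix $z_0\in\Delta$ with $d_0:=d(z_0)$; for $|z-z_0|<d_0$ the bound $d(z)\ge d_0-|z-z_0|>0$ persists, and writing $T-zI=(T-z_0I)+(z_0-z)I$ one can compare the orthogonal projections $P_{z_0}$ and $P_z$ onto the closed ranges of $T-z_0I$ and $T-zI$. A standard perturbation estimate yields $\|P_z-P_{z_0}\|<1$ in a sufficiently small disk; the classical fact that two orthogonal projections at distance strictly less than $1$ have equal-dimensional complements then forces constancy of the deficiency on each component. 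Part (ii) is then immediate: $\mathrm{def}(T-zI)=0$ combined with injectivity and closed range makes $T-zI:\mathrm{dom}(T)\to\mathfrak{H}$ bijective, and $\|(T-zI)^{-1}\|\le 1/d(z)$ gives $z\in\rho(T)$ together with (\ref{estres}).

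Part (iii) is of a different flavour because $T$ is not assumed closed; I would prove closability by contradiction using a Toeplitz--Hausdorff-style scaling. Suppose $u_n\in\mathrm{dom}(T)$, $u_n\to 0$ and $Tu_n\to v$. For any $u\in\mathrm{dom}(T)$, $u\neq 0$, and any $\alpha\in\mathbb{C}$ set $w_n:=u+\alpha u_n\in\mathrm{dom}(T)$; using $(u,u_n)\to 0$, $(u_n,Tu)\to 0$, and $(u_n,Tu_n)\to 0$ (the last by Cauchy--Schwarz), one computes
\begin{equation*}
(\widehat w_n,T\widehat w_n)\;\longrightarrow\;\frac{(u,Tu)+\alpha\,(u,v)}{\|u\|^{2}}\in\overline{\mathfrak{N}(T)},\qquad \widehat w_n:=w_n/\|w_n\|.
\end{equation*}
If $(u,v)\neq 0$ for some $u\in\mathrm{dom}(T)$, then $\alpha\mapsto[(u,Tu)+\alpha(u,v)]/\|u\|^{2}$ is a bijection of $\mathbb{C}$, so $\overline{\mathfrak{N}(T)}=\mathbb{C}$; however, convexity of $\mathfrak{N}(T)$ together with $\mathfrak{N}(T)\neq\mathbb{C}$ confines $\overline{\mathfrak{N}(T)}$ to a closed half-plane, a contradiction. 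Hence $(u,v)=0$ for every $u$ in the dense domain, which forces $v=0$. So $T$ is closable, and the standard equivalence (closable $\Leftrightarrow$ $T^{*}$ densely defined, for densely defined $T$) completes (iii).

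The step I expect to be most delicate is the constancy of $\mathrm{def}(T-zI)$ on a component of $\Delta$: the numerical-range inequality alone delivers only injectivity and a closed range, and invariance of the codimension has to be extracted from a genuine (if elementary) perturbation-theoretic argument comparing orthogonal projections. Everything else in (i)--(iii) reduces either to that single inequality or to the Toeplitz--Hausdorff-type scaling used above.
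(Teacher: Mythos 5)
Your proof is correct, and it is essentially the classical argument: the paper itself gives no proof of this proposition but refers to Kato's \emph{Perturbation Theory for Linear Operators}, Ch.~V, whose proof rests on exactly your inequality $\|(T-zI)u\|\ge \mathrm{dist}(z,\overline{\mathfrak{N}(T)})\,\|u\|$, the stability of the deficiency under small perturbations for the constancy on components, and the Toeplitz--Hausdorff convexity argument for closability. The only point needing care is the projection comparison in the deficiency step, where the gap estimate forces you to shrink to $|z-z_0|<d_0/2$ rather than $|z-z_0|<d_0$ --- but your ``sufficiently small disk'' hedge covers this, and connectedness does the rest.
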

\begin{corollary}\label{spect-n-ran-bounded} For a bounded operator $T\in \mathcal{L}(\mathfrak{H})$ the spectrum
$\sigma(T)$ is a subset of $\ \overline{{\mathfrak{N}}(T)}$.
\end{corollary}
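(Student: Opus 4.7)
The plan is to reduce the corollary to Proposition \ref{prop-n-ran} by showing that the deficiency of $T-zI$ vanishes throughout the complement $\Delta = \mathbb{C} \setminus \overline{\mathfrak{N}(T)}$. Since $T \in \mathcal{L}(\mathfrak{H})$ is bounded, it is automatically closed and everywhere defined, so part (i) of Proposition \ref{prop-n-ran} applies: $\mathrm{def}(T-zI)$ is constant on each connected component of $\Delta$.

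First I would observe that $\mathfrak{N}(T)$ is a bounded subset of $\mathbb{C}$; indeed for unit vectors $u$, $|(u,Tu)| \le \|T\|$. Consequently $\overline{\mathfrak{N}(T)}$ is a bounded convex set, and its complement $\Delta$ is unbounded. In particular, the degenerate ``strip'' case of Remark \ref{rem-1-1}(b) is excluded, so $\Delta$ is a single connected open set.

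Next, the standard Neumann series shows that for any $z$ with $|z|>\|T\|$ the operator $(zI-T)$ is boundedly invertible, so $\mathrm{ran}(T-zI)=\mathfrak{H}$ and $\mathrm{def}(T-zI)=0$. Such $z$ belongs to $\Delta$, so by constancy of the deficiency on the one component of $\Delta$ we conclude $\mathrm{def}(T-zI)=0$ for every $z\in\Delta$. Part (ii) of Proposition \ref{prop-n-ran} then gives $\Delta \subseteq \rho(T)$, which is precisely $\sigma(T) \subseteq \overline{\mathfrak{N}(T)}$.

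The argument is essentially a direct application of the proposition, and there is no real obstacle. The only point requiring minor attention is ruling out the strip exception to connectedness of $\Delta$; this is immediate from boundedness of $T$, since a strip is unbounded while $\overline{\mathfrak{N}(T)}$ sits inside the disk of radius $\|T\|$.
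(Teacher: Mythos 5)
Your proof is correct and follows exactly the route the paper intends: the corollary is stated as an immediate consequence of Proposition \ref{prop-n-ran}, with the vanishing of $\mathrm{def}(T-zI)$ for bounded $T$ supplied by the Neumann series for $|z|>\|T\|$ and propagated over the single connected component of $\Delta$ by part (i). The paper gives no further detail, and your filling in of the connectedness point (ruling out the strip case via boundedness of $\mathfrak{N}(T)$) is the right, standard completion.
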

For unbounded operator $T$ the relation between spectrum and numerical range is more complicated.
For example, it may very well happen that $\sigma(T)$ is not contained
in $\overline{{\mathfrak{N}}(T)}$, but for a closed operator $T$ the essential spectrum
$\sigma_{ess}(T)$ is always a subset of $\overline{{\mathfrak{N}}(T)}$. The condition
${\mathrm{def}}(T-zI) = 0, \ z\notin \overline{{\mathfrak{N}}(T)}$ in Proposition \ref{prop-n-ran} (ii)
serves to ensure that for those unbounded operators one gets
\begin{equation}\label{spect-n-ran}
\sigma(T)\subset \overline{{\mathfrak{N}}(T)} \ ,
\end{equation}
i.e., the same conclusion as in Corollary \ref{spect-n-ran-bounded} for bounded operators.
\begin{definition}\label{sect-oper}
{\em Operator $T$ is called \textit{sectorial} with {semi-angle}
$\alpha\in(0, \pi/2)$ and a vertex at $z=0$  if
$$
{\mathfrak{N}}(T)  \subseteq S_{\alpha} := \{z\in
{{\mathbb{C}}}: |\arg \ z| \leq \alpha \} \ .
$$
If, in addition, $T$ is closed and there is $z \in
{\mathbb{C}}\setminus S_{\alpha}$ such that it belongs to the resolvent set $\rho (T)$,
then operator $T$ is called  $m$-sectorial.}
\end{definition}
\begin{remark}\label{rem-1-2}
Let $T$ be $m$-sectorial with the {semi-angle} $\alpha\in(0, \pi/2)$ and the vertex at $z=0$.
Then it is obvious that the operators $a T$ and $T_b := T + b$ belong to the same sector $S_{\alpha}$
for any non-negative parameters $a,b \geq 0$. In fact ${\mathfrak{N}}(T_b)\subseteq S_{\alpha} + b$, i.e.
the operator $T_b$ has the vertex at $z=b$.
\end{remark}
Some of important properties of the $m$-sectorial operators are summarized by the following
\begin{proposition}\label{hol-contr}
If $T$ is {$m$-sectorial} in $\mathfrak{H}$, then the semigroup
$\{U(\zeta):= e^{-\zeta \, T}\}_{\, \zeta }$ generated by the operator $T${\rm{:}} \\
{\rm{(}}i{\rm{)}} is \textit{holomorphic} in the open sector $\{\zeta \in S_{\pi/2 - \alpha}\}${\rm{;}} \\
{\rm{(}}ii{\rm{)}} is a contraction, i.e. ${{\mathfrak{N}}}(U(\zeta))$
is a subset of the unit disc ${\mathfrak{D}}_{r=1}:=\{z\in {{\mathbb{C}}}: |z| \leq 1 \}$ for
$\{\zeta \in S_{\pi/2 - \alpha}\}$.
\end{proposition}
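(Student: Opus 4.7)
The plan is to pass from the $m$-sectorial hypothesis on $T$ to a strongly continuous contraction semigroup on the positive real axis via the Hille--Yosida/Lumer--Phillips theorem, then obtain the holomorphic sector by a rotation trick, and check operator-norm analyticity through a Dunford--Taylor contour integral.

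Since $T$ is $m$-sectorial with vertex at $0$ and semi-angle $\alpha<\pi/2$, its numerical range lies in the right half-plane, and by Proposition~\ref{prop-n-ran}(i)-(ii) the whole complement $\mathbb{C}\setminus\overline{S_\alpha}$ belongs to $\rho(T)$ with the bound~(\ref{estres}). In particular, $-\lambda\in\rho(T)$ for every $\lambda>0$, with $\|(T+\lambda I)^{-1}\|\le 1/\lambda$ because $\mathrm{dist}(-\lambda,\overline{S_\alpha})=\lambda$, so $-T$ satisfies the Hille--Yosida conditions and generates a $C_0$-contraction semigroup $\{e^{-tT}\}_{t\ge 0}$. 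For any $\theta$ with $|\theta|<\pi/2-\alpha$, set $T_\theta:=e^{i\theta}T$: its numerical range $e^{i\theta}\mathfrak{N}(T)$ lies in a sector of semi-angle $\alpha+|\theta|<\pi/2$, and any resolvent point $z_0\notin S_\alpha$ of $T$ provides the resolvent point $e^{i\theta}z_0$ of $T_\theta$ outside the rotated sector, so $T_\theta$ is again $m$-sectorial. The first step applied to $T_\theta$ yields the contraction semigroup $\{e^{-sT_\theta}\}_{s\ge 0}$, and the substitution $\zeta=s\,e^{i\theta}$ gives $U(\zeta)=e^{-\zeta T}$ as a contraction for every $\zeta$ in the open sector $S_{\pi/2-\alpha}$, which is~(ii).

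For the holomorphy in~(i), fix $\beta$ with $\alpha<\beta<\pi/2$ and write
$$
U(\zeta)=\frac{1}{2\pi i}\int_{\Gamma} e^{-\zeta z}\,(zI-T)^{-1}\,dz,
$$
where $\Gamma=\partial S_\beta$ is traversed so as to enclose $\sigma(T)\subset\overline{S_\alpha}$. The resolvent bound~(\ref{estres}) on $\Gamma$, combined with the exponential decay of $|e^{-\zeta z}|$ valid whenever $|\arg\zeta|<\pi/2-\beta$, makes the integral absolutely convergent and holomorphic in $\zeta$ by differentiation under the integral sign. Identification of this contour integral with the Hille--Yosida semigroup on $(0,\infty)$ is the classical Dunford calculus argument (deformation of $\Gamma$ onto a Bromwich contour), after which letting $\beta\searrow\alpha$ exhausts the open sector $S_{\pi/2-\alpha}$. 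I expect the main technical obstacle to be precisely this last consistency check, together with verifying that the resolvent estimate stays uniform along the chosen contour so that both the absolute convergence of the integral and the differentiation under it are fully justified.
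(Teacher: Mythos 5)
The paper offers no proof of Proposition \ref{hol-contr}: it is stated as a summary of classical properties of $m$-sectorial generators (the material of \cite{Kato} and \cite{Zag}), so there is no internal argument to compare yours against. Your sketch is the standard textbook proof and is essentially sound: the Hille--Yosida step on the real axis, the rotation $T_\theta := e^{i\theta}T$ for $|\theta|<\pi/2-\alpha$ to get contractivity on each ray of the sector, and the Dunford--Taylor integral for holomorphy are exactly the classical route, and your resolvent estimates along the way are correct. Two points need tightening. First, the contour $\Gamma=\partial S_\beta$ passes through the vertex $z=0$, where ${\rm{dist}}(z,\overline{{\mathfrak{N}}(T)})=0$, so the bound (\ref{estres}) degenerates there and $0$ may even belong to $\sigma(T)$; you must indent $\Gamma$ by a small arc of radius $\delta$ run through the left half-plane (i.e.\ take the boundary of $S_\beta\cup{\mathfrak{D}}_{r=\delta}$), on which the resolvent stays bounded by $1/(\delta\sin(\beta-\alpha))$, before letting $\beta\searrow\alpha$. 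Second, your assertion that all of ${\mathbb{C}}\setminus S_\alpha$ lies in $\rho(T)$ tacitly uses Proposition \ref{prop-n-ran}(i): the deficiency of $T-zI$ is constant on the connected set ${\mathbb{C}}\setminus\overline{{\mathfrak{N}}(T)}$ and vanishes at the single resolvent point supplied by the definition of $m$-sectoriality, hence vanishes throughout; this is worth making explicit, since it is the only place the ``$m$'' enters and it is what upgrades the numerical-range inclusion to the spectral inclusion (\ref{spect-n-ran}) needed for the contour integral. With these repairs the argument is complete and coincides with the classical proof the paper implicitly invokes.
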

\section{Quasi-Sectorial Contractions and Main Theorem}\label{sec2}
\setcounter{equation}{0}
\renewcommand{\theequation}{\arabic{section}.\arabic{equation}}

The notion of the \textit{quasi-sectorial} contractions was introduced in \cite{CachZag-Q-S} to study the
operator-norm approximations of semigroups. In paper \cite{CaNeZag} this class of contractions appeared in
analysis of the operator-norm error bound estimate of the exponential Trotter product formula for the
case of accretive perturbations. Further applications of these contractions which, in particular, improve
the rate of convergence estimate of \cite{CachZag-Q-S} for the Euler formula, one can find in \cite{Paul},
\cite{Cach} and \cite{BentPaul}.
\begin{definition}\label{alpha-domain}
{\em For $\alpha\in[0, \pi/2)$ we define in the complex plane ${\mathbb{C}}$ a \textit{closed domain}:
$$ D_{\alpha}:=\{z\in {\mathbb{C}}: |z|\leq \sin \alpha\} \cup
\{z\in {\mathbb{C}}: |\arg (1-z)|\leq \alpha \ {\rm{and}}\ |z-1|\leq
\cos \alpha \} .$$
This is a convex subset of the unit disc ${\mathfrak{D}}_{r=1}$, with \textit{"angle"}
(in contrast to \textit{tangent}) touching of its boundary $\partial {\mathfrak{D}}_{r=1}$
at only one point $z=1$, see Figure 1. It is evident that $D_{\alpha}\subset D_{\beta > \alpha}$.}
\end{definition}

\begin{definition}\label{Q-S-cont}{\rm{(}}Quasi-Sectorial Contractions {\rm{\cite{CachZag-Q-S})}}
{\em A {contraction} $C$ on the Hilbert space $\mathfrak{H}$ is called
\textit{quasi-sectorial} with  semi-angle $\alpha\in [0, \pi/2)$
with respect to the vertex at $z=1$, if
${\mathfrak{N}}(C) \subseteq D_{\alpha}$.}
\end{definition}

Notice that if operator $C$ is a \textit{quasi-sectorial} contraction, then
$I - C$ is an $m$-\textit{sectorial} operator with
vertex $z=0$ and semi-angle $\alpha$. The limits $\alpha=0$ and $\alpha = \pi/2$ correspond,
respectively, to non-negative (i.e. \textit{self-adjoint}) and to \textit{general} contraction.

The \textit{resolvent} of an $m$-sectorial operator $A$, with {semi-angle}
$\alpha\in(0, \pi/4]$ and vertex at $z=0$, gives the first non-trivial (and for us a \textit{key}) example
of a quasi-sectorial contraction.

\begin{proposition}\label{Th-res}
Let $A$ be $m$-sectorial operator with {semi-angle} $\alpha\in [0, \pi/4]$ and vertex at $z=0$.
Then $\{F(t):= (I + t A)^{-1}\}_{\, t \geq 0}$ is a family
of quasi-sectorial contractions which numerical ranges ${\mathfrak{N}}(F(t)) \subseteq D_{\alpha}$
for all $t \geq 0$.
\end{proposition}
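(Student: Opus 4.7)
The plan is to fix a unit vector $v \in \mathfrak{H}$, set $u := F(t) v$ so that $v = (I + tA) u$, and show the quadratic form value $w := (v, F(t) v) = (v, u)$ lies in $D_\alpha$; varying $v$ over the unit sphere then gives $\mathfrak{N}(F(t)) \subseteq D_\alpha$. Before anything else, I would confirm that $F(t)$ is a bounded operator on all of $\mathfrak{H}$: $m$-sectoriality provides some $z \in \mathbb{C}\setminus \overline{S_\alpha}$ in $\rho(A)$, and Proposition~\ref{prop-n-ran}(ii), combined with the connectedness of $\mathbb{C}\setminus\overline{S_\alpha}$ for $\alpha<\pi/2$, forces the whole complement of $\overline{S_\alpha}$ into $\rho(A)$; in particular $-1/t \in \rho(A)$, so $F(t) = (I+tA)^{-1}$ is defined on all of $\mathfrak{H}$.

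The two identities driving the rest of the argument are
\[
w \,=\, \|u\|^2 + t\,\overline{(u,Au)}, \qquad 1 - w \,=\, t\bigl[(u,Au) + t\|Au\|^2\bigr] \,=\, t(v, Au),
\]
where the second form is obtained from the first by inserting the normalization $\|v\|^2 = 1 = \|u\|^2 + 2t\,\mathrm{Re}(u,Au) + t^2\|Au\|^2$. Because $A$ is sectorial, $(u,Au) \in S_\alpha$; since $t^2\|Au\|^2$ is a non-negative real and $S_\alpha$ is a convex cone, $1 - w$ also lies in $S_\alpha$, giving $|\arg(1-w)| \leq \alpha$ immediately.

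The main technical step, which I expect to be the chief obstacle, is the pair of magnitude estimates
\[
|w| \,\leq\, \|u\|, \qquad |1 - w| \,\leq\, t\,\|Au\|.
\]
My plan is to expand $|w|^2$ and $|1-w|^2$, apply the Cauchy--Schwarz inequality $|(u,Au)|^2 \leq \|u\|^2\|Au\|^2$ to the single cross-term that appears in each expansion, and then notice that the remaining bracket collapses to the normalization $\|v\|^2 = 1$, leaving $\|u\|^2$ (respectively $\|Au\|^2$) as the final bound. Using $\mathrm{Re}(u,Au)\geq 0$ (sectoriality with $\alpha<\pi/2$), the same normalization also delivers the by-product $\|u\|^2 + t^2\|Au\|^2 \leq 1$, which is the glue for the final dichotomy.

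The finish is a short case split on $t\|Au\|$ versus $\cos\alpha$. If $t\|Au\| \leq \cos\alpha$, then $|1-w| \leq \cos\alpha$ combined with $|\arg(1-w)| \leq \alpha$ places $w$ in the cone piece of $D_\alpha$. Otherwise $t\|Au\| > \cos\alpha$, and the by-product forces $\|u\|^2 < 1 - \cos^2\alpha = \sin^2\alpha$, so $|w| \leq \|u\| < \sin\alpha$ puts $w$ in the disc piece of $D_\alpha$. In either case $w \in D_\alpha$, as required; the intermediate bound $\|u\| \leq 1$ also shows that $F(t)$ is genuinely a contraction.
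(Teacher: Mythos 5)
Your argument is correct, and its second half genuinely differs from the paper's. Both proofs start from the same two identities, $(v,F(t)v)=\|u\|^2+t\overline{(u,Au)}$ and $1-(v,F(t)v)=t(u,Au)+t^2\|Au\|^2$ with $u=F(t)v$, and both use Proposition~\ref{prop-n-ran}(ii) to see that $F(t)$ is everywhere defined and contractive. But the paper then keeps only the \emph{argument} information: it concludes ${\mathfrak{N}}(F(t))\subseteq S_\alpha$ and ${\mathfrak{N}}(I-F(t))\subseteq S_\alpha$, hence ${\mathfrak{N}}(F(t))\subseteq S_\alpha\cap(1-S_\alpha)$, and finishes by the purely geometric containment $S_\alpha\cap(1-S_\alpha)\subset D_\alpha$ — a lens whose tip $e^{i\alpha}/(2\cos\alpha)$ lies in $D_\alpha$ precisely because $\alpha\leq\pi/4$. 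You instead retain the \emph{modulus} information as well: Cauchy--Schwarz gives $|w|\leq\|u\|$ and $|1-w|\leq t\|Au\|$, and ${\rm Re}(u,Au)\geq 0$ gives $\|u\|^2+t^2\|Au\|^2\leq 1$, after which the dichotomy $t\|Au\|\lessgtr\cos\alpha$ lands $w$ in the cone piece or the disc piece of $D_\alpha$ directly. What your route buys is twofold: it avoids having to verify the lens-containment lemma (which the paper only asserts "by Definition~\ref{alpha-domain}"), and — more strikingly — it nowhere uses $\alpha\leq\pi/4$, so it proves ${\mathfrak{N}}(F(t))\subseteq D_\alpha$ for every semi-angle $\alpha\in[0,\pi/2)$, a strictly stronger conclusion than the Proposition as stated (the restriction $\alpha\leq\pi/4$ remains genuinely needed elsewhere, e.g.\ for Lemma~\ref{z-n} and Theorem~\ref{Th-main}, but not here). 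The only cosmetic remark is that your two modulus bounds are just Cauchy--Schwarz applied to $(v,u)$ and $(v,tAu)$ respectively, so the expansion of $|w|^2$ and $|1-w|^2$ can be skipped.
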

\begin{proof}:
First, by virtue of Proposition \ref{prop-n-ran} (ii) we obtain the estimate:
\begin{equation}\label{F-contr}
\|F(t)\|\leq \frac{1}{t \ {\rm{dist}}(1/t \ , \, - S_{\alpha})} \, = \, 1 \ \ ,
\end{equation}
which implies that operators $\{F(t)\}_{\, t \geq 0}$ are contractions with numerical ranges
${\mathfrak{N}}(F(t)) \subseteq {\mathfrak{D}}_{r=1}$.

Next, by Remark \ref{rem-1-2} for all $u\in \mathfrak{H}$ one gets
$(u,F(t)u)= (v_t,v_t) + t (A v_t,v_t) \in S_{\alpha}$, where $v_t := F (t)u $, i.e. for any $t \geq 0$
the numerical range ${\mathfrak{N}}(F(t)) \subseteq S_{\alpha}$. Similarly, one finds that
$(u,(I-F(t))u) = t (v,A v) + t^2 (A v, A v) \in S_{\alpha}$, i.e.,
${\mathfrak{N}}(I-F(t)) \subseteq S_{\alpha}$. Therefore, for all $t\geq 0$ we obtain:
\begin{equation}\label{N-range-Fb}
{\mathfrak{N}}(F (t)) \subseteq (S_{\alpha} \cap (1 - S_{\alpha})) \subset \mathfrak{D}_{r=1} \ .
\end{equation}
Moreover, since $\alpha \leq \pi/4$, by Definition \ref{alpha-domain} we get
$(S_{\alpha} \cap (1 - S_{\alpha}))\subset D_{\alpha}$, i.e. for these values of $\alpha$ the
operators $\{F(t)\}_{\, t \geq 0}$ are quasi-sectorial contractions with numerical ranges in $D_{\alpha}$.
\end{proof}

Now we are in position to prove the \textit{main} Theorem establishing a relation between
quasi-sectorial contraction semigroups and a certain class of $m$-sectorial generators.
\begin{theorem}\label{Th-main}
Let $A$ be an {$m$-sectorial} operator with  semi-angle
$\alpha\in[0, \pi/4]$ and with  vertex at $z=0$. Then $\{e^{-t \, A}\}_{\, t \geq 0}$ is a quasi-sectorial
contraction semigroup with numerical ranges ${\mathfrak{N}}(e^{-t \, A}) \subseteq D_{\alpha}$ for all
$t \geq 0$.
\end{theorem}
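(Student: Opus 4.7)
My strategy is to combine Proposition~\ref{Th-res} with the Euler exponential formula. Since $A$ is $m$-sectorial with semi-angle $\alpha\le\pi/4$, Proposition~\ref{hol-contr} guarantees that $\{e^{-tA}\}_{t\ge 0}$ is a holomorphic contraction semigroup on the sector $S_{\pi/2-\alpha}$. A standard consequence of this holomorphy is the operator-norm convergence of the Euler iterates
\bead
e^{-tA}\;=\;\lim_{n\to\infty}F(t/n)^{n}\;=\;\lim_{n\to\infty}\bigl(I+(t/n)A\bigr)^{-n},
\eead
uniformly for $t$ in bounded subsets of $[0,\infty)$.

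Next, I observe that the class $\mathcal{Q}_{\alpha}:=\{T\in\mathcal{L}(\mathfrak{H}):\mathfrak{N}(T)\subseteq D_\alpha\}$ is closed in the weak operator topology, since for every unit vector $u\in\mathfrak{H}$ the functional $T\mapsto(u,Tu)$ is WOT-continuous and $D_\alpha\subset\mathbb{C}$ is closed (Definition~\ref{alpha-domain}). In particular $\mathcal{Q}_\alpha$ is norm-closed, so combining these two observations reduces the theorem to verifying $F(t/n)^n\in\mathcal{Q}_\alpha$ for every $n\ge 1$. For $n=1$ this is exactly Proposition~\ref{Th-res}.

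The main obstacle is therefore the case $n\ge 2$: establishing $\mathfrak{N}(F(s)^n)\subseteq D_\alpha$ for all $s\ge 0$. This is not a formal consequence of the $n=1$ case, because the numerical range of a power is not in general controlled by the numerical range of the operator. To handle it one must exploit the additional structure extracted inside the proof of Proposition~\ref{Th-res}: both $F(s)$ and $I-F(s)$ are sectorial with vertex at $0$ and semi-angle $\alpha$, and the restriction $\alpha\le\pi/4$ produces the inclusion $S_\alpha\cap(1-S_\alpha)\subseteq D_\alpha$. I would attempt an induction on $n$, writing $F(s)^{n}=F(s)\cdot F(s)^{n-1}$ and splitting $(u,F(s)^{n}u)$ via the iterates $v_k:=F(s)^{k}u/\|F(s)^{k}u\|$ into the sectorial quadratic forms $(v_k,F(s)v_k)$ and $(v_k,(I-F(s))v_k)$, then using the convexity of $D_\alpha$ together with the sectorial bounds to close the induction. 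Should this direct route prove unwieldy, a natural fallback is the contour-integral representation of $e^{-tA}$ over the boundary of a slightly enlarged sector $S_{\alpha'}$ with $\alpha<\alpha'<\pi/2$, estimating $(u,(\lambda I-A)^{-1}u)$ via Proposition~\ref{prop-n-ran}(ii) and then verifying that the resulting integral lies in $D_\alpha$.
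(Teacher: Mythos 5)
Your outer structure matches the paper's: reduce to the Euler approximants $F(t/n)^{n}$ via Lemma~\ref{Euler} and the fact that $\{T:\mathfrak{N}(T)\subseteq D_{\alpha}\}$ is closed under weak operator limits (the strong convergence of Lemma~\ref{Euler} already suffices for this, so you do not need --- and should not invoke --- operator-norm convergence of the Euler formula, which in this paper is a downstream consequence of the theorem, cf. Proposition~\ref{norm-appr-Euler}). You also correctly isolate the real difficulty: bounding $\mathfrak{N}(F(s)^{n})$ for $n\ge 2$. But your proposed resolution of that difficulty does not work, and the fallback is too vague to count as a proof.

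The induction you sketch founders on exactly the point you yourself flag. Writing $F^{n}=F\cdot F^{n-1}$ gives $(u,F^{n}u)=(F^{*}u,\,F^{n-1}u)$, an off-diagonal matrix element; it is not of the form $(v_{k},Fv_{k})$ for any renormalized iterate $v_{k}=F^{k}u/\|F^{k}u\|$, because $F(s)$ is not normal. The numerical-range hypothesis controls only diagonal quadratic forms, so the ``sectorial quadratic forms'' into which you want to split are simply not present in $(u,F^{n}u)$, and the convexity of $D_{\alpha}$ cannot be brought to bear. The contour-integral fallback suffers from the same defect: $(u,e^{-tA}u)=\frac{1}{2\pi i}\oint e^{-t\lambda}\,(u,(\lambda I-A)^{-1}u)\,d\lambda$ is a complex-weighted integral of quantities lying in some region, and such an integral need not lie in that region, let alone in $D_{\alpha}$. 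The missing ingredient is Kato's numerical-range mapping theorem (Proposition~\ref{Kato-map}): applied to $f(z)=z^{2}$, with $E'=f(D_{\alpha})\subseteq D_{\alpha}$ (Lemma~\ref{z-n}, which is precisely where the restriction $\alpha\le\pi/4$ enters) and $E=f^{-1}(E')=D_{\alpha}\cup(-D_{\alpha})$ convex, so that the convex kernel $K=E\supseteq D_{\alpha}$, it yields $\mathfrak{N}(C^{2})\subseteq D_{\alpha}$ whenever $\mathfrak{N}(C)\subseteq D_{\alpha}$. Iterating this along the dyadic subsequence $C=F(t/2^{n})$ gives $\mathfrak{N}\bigl(F(t/2^{n})^{2^{n}}\bigr)\subseteq D_{\alpha}$, which is all the Euler limit needs. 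Without this mapping theorem (or an equivalent substitute), the passage from $n=1$ to higher powers remains an unfilled gap.
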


The proof of the theorem is based on a series of lemmata and on the numerical range \textit{mapping}
theorem by Kato \cite{Kato-map} (see also an important comment about this theorem in \cite{Uch}).
\begin{proposition}\label{Kato-map}{\em \cite{Kato-map}}
Let $f(z)$ be a rational function on the complex plane ${\mathbb{C}}$, with $f(\infty)=\infty$.
Let for some compact and convex set $E'\subset {\mathbb{C}}$ the inverse function
$f^{-1}: E' \mapsto E \supseteq K $, where $K$ is a convex kernel of $E$, i.e., a subset of $E$
such that $E$ is \textit{star-shaped} relative to any $z\in K$.

If $C$ is an operator with numerical range $ {\mathfrak{N}}(C)\subseteq K$, then
${\mathfrak{N}}(f(C))\subseteq E'$.
\end{proposition}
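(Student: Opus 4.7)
The plan is to prove the contrapositive: show that if $\lambda \in \mathbb{C}\setminus E'$, then $\lambda \notin \mathfrak{N}(f(C))$, i.e., $(u,(f(C)-\lambda I)u)\neq 0$ for every unit vector $u$. Because $E'$ is compact and convex, it is the intersection of its closed supporting half-planes, so it suffices to verify that for every closed half-plane $H\supseteq E'$ one has $\mathfrak{N}(f(C))\subseteq H$. Via an affine change of coordinates (replacing $f$ by its composition with a real-affine map and shifting $\lambda$), I can further reduce to a normalized half-plane, say $H=\{\mathrm{Re}\,w\le 0\}$, and to an arbitrary $\lambda$ with $\mathrm{Re}\,\lambda>0$.

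Next I would exploit the rational structure. Since $f(\infty)=\infty$, the equation $f(z)=\lambda$ has finitely many finite solutions $z_1,\dots,z_n$ (counted with multiplicity), and the poles $w_1,\dots,w_m$ of $f$ are likewise finite. Write $f(z)-\lambda=P_\lambda(z)/Q(z)$, so that
$$f(C)-\lambda I \;=\; Q(C)^{-1}\prod_{i=1}^{n}(C-z_i I),$$
where the factors commute. The hypotheses force both sets of special points outside $K$: the poles $w_j$ satisfy $f(w_j)=\infty\notin E'$, so $w_j\notin E\supseteq K$; and for any $\lambda\notin E'$, the roots $z_i$ lie in $f^{-1}(\mathbb{C}\setminus E')=\mathbb{C}\setminus E\subseteq \mathbb{C}\setminus K$. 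By Proposition \ref{prop-n-ran}(ii), the resolvents $(C-w_j I)^{-1}$ exist and are bounded, so $f(C)$ itself is a well-defined bounded operator, and each operator-factor $(C-z_i I)$ has $0$ outside its numerical range. The proof would then proceed by induction on the total degree of $f$ (or equivalently on $n+m$), peeling off either one simple pole (writing $f$ as a partial-fraction piece $c/(z-w)$ plus a simpler rational function) or one simple root.

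The main obstacle is precisely that numerical ranges do not respect operator products: even for commuting $A,B$ with $0\notin\mathfrak{N}(A)\cup\mathfrak{N}(B)$, one can have $0\in\mathfrak{N}(AB)$, so the factorization above does not yield the conclusion factor-by-factor. This is where the \emph{convex-kernel / star-shape} hypothesis must enter. Geometrically, the star-shape of $E$ relative to every $k\in K$ means that for any $z_i\notin E$ and any $k\in K$, the ray from $k$ through $z_i$ exits $E$ at a definite boundary point; in operator-theoretic language this translates, for a unit vector $u$ with $k=(u,Cu)\in K$, into a uniform angular bound on the forms $(u,(C-z_iI)u)$ and $(u,(C-w_jI)^{-1}u)$ simultaneously, compatible across the factorization. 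Assembling these angular bounds along the inductive reduction—first for Möbius transformations (where the calculation is explicit) and then for general rational $f$—should yield a fixed open half-plane into which the form $(u,(f(C)-\lambda I)u)$ falls, and in particular $0$ is avoided. The technically most delicate step is therefore verifying that the star-shape condition transports cleanly through the inductive peeling of a pole or a zero, i.e., that the appropriate convex kernel persists at each step.
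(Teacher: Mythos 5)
Note first that the paper itself does not prove Proposition~\ref{Kato-map}: it is imported from Kato's paper \cite{Kato-map}, and Section~\ref{sec3} contains proofs only of Lemma~\ref{z-n}, Lemma~\ref{Euler} and Theorem~\ref{Th-main}. So what you attempted is a reconstruction of Kato's mapping theorem itself --- and as written your proposal is not a proof but an outline whose decisive step is missing. The preparatory part is sound: passing to the contrapositive, intersecting supporting half-planes of the compact convex set $E'$, factorizing $f(z)-\lambda = c\prod_i(z-z_i)\big/\prod_j(z-w_j)$, and locating all zeros $z_i$ and poles $w_j$ outside $E\supseteq K$ (hence outside $\overline{{\mathfrak{N}}(C)}$, so that $f(C)-\lambda$ is a well-defined bounded operator). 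But then you correctly name the true obstacle --- numerical ranges are not multiplicative, $(u,ABu)\neq (u,Au)(u,Bu)$ even for commuting factors --- and your ``resolution'' consists of the assertion that star-shapedness of $E$ relative to $K$ produces ``angular bounds\dots compatible across the factorization'' which ``should yield'' the conclusion. No such bound is formulated, let alone proved, and per-factor control of the forms $(u,(C-z_iI)u)$ cannot in general be assembled into control of the form of the product: that is exactly the failure you yourself identified two sentences earlier. The entire content of Kato's theorem is concentrated in this one step; everything before and after it in your text is routine.

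Two further wrinkles confirm that the argument has not actually been carried through. First, when you replace $E'$ by a supporting half-plane $H$, the preimage $f^{-1}(H)$ is larger than $E$, but convex kernels are \emph{not} monotone under set inclusion, so the star-shape hypothesis you are entitled to use is that of $E$ relative to $K$, not that of $f^{-1}(H)$; your affine-normalization phrasing silently conflates the two. Second, the alternative inductive route of ``peeling off a simple pole by partial fractions'' fares no better: numerical ranges do behave subadditively, ${\mathfrak{N}}(A+B)\subseteq {\mathfrak{N}}(A)+{\mathfrak{N}}(B)$ as a sumset, but this bound is far too lossy to land in $E'$, and the convex-kernel hypothesis does not distribute over such decompositions. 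In short, the proposal restates the difficulty of Proposition~\ref{Kato-map} rather than overcoming it; to complete it you would need the actual mechanism of \cite{Kato-map}, which converts the geometric star-shape condition into an operator statement by a genuinely different device than factor-by-factor form estimates.
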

Notice that for a convex set $E$ the corresponding  \textit{convex} kernel $K = E$.
\begin{lemma}\label{z-n}
Let $f_n(z) = z^n$ be complex functions, for $z\in {\mathbb{C}}$ and $n\in \mathbb{N}$.
Then the sets $f_n(D_\alpha)$ are convex and domains $f_n(D_\alpha)\subseteq D_\alpha$ for
{any $n\in \mathbb{N}$}, if $\alpha\leq \pi/4$.
\end{lemma}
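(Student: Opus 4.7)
The plan is to establish the two assertions of the lemma separately: the inclusion $f_n(D_\alpha)\subseteq D_\alpha$, and the convexity of $f_n(D_\alpha)$. Both exploit the decomposition $D_\alpha=\overline{B(0,\sin\alpha)}\cup W_\alpha$, where $W_\alpha:=\{z:|\arg(1-z)|\leq\alpha,\ |1-z|\leq\cos\alpha\}$ is the wedge at the vertex $z=1$, and both analyze $f_n$ on each piece separately.

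For the inclusion I would split into two cases. If $|z|\leq\sin\alpha$, then immediately $|z^n|\leq\sin^n\alpha\leq\sin\alpha$ (using $\sin\alpha\leq\sin(\pi/4)<1$), so $z^n$ belongs to the disk component of $D_\alpha$. If instead $|z|>\sin\alpha$, then $z\in W_\alpha$, and I would use the factorization
\[
1-z^n\ =\ (1-z)\,\sum_{k=0}^{n-1}z^k,
\]
so that $\arg(1-z^n)=\arg(1-z)+\arg\sum_{k=0}^{n-1}z^k$. The key preliminary is that for $\alpha\leq\pi/4$ the only $n$-th root of unity sitting in $D_\alpha$ is the vertex $z=1$ itself: any other root $e^{2\pi ik/n}$ has modulus $1>\sin\alpha$ and fails to lie in $W_\alpha$ (a direct computation gives $2|\sin(\pi k/n)|>\cos\alpha$ whenever $|\arg(1-e^{2\pi ik/n})|\leq\alpha$). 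Consequently $\mathrm{Im}\log(1-z^n)$ is harmonic and bounded on $D_\alpha\setminus\{1\}$, and the maximum principle reduces the required bound $|\arg(1-z^n)|\leq\alpha$ to a check on $\partial D_\alpha$. On the circular arc $|z|=\sin\alpha$ this is immediate from $|z^n|\leq\sin^n\alpha$ together with the elementary inequality $|\arg(1-w)|\leq\arcsin|w|$. On the straight edges $z=1-te^{\pm i\alpha}$, $t\in[0,\cos\alpha]$, the factorization reduces the estimate to showing $\arg\sum_{k=0}^{n-1}(1-te^{\pm i\alpha})^k\in[-2\alpha,0]$ respectively $[0,2\alpha]$, uniformly in $n$ and $t$. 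A parallel modulus estimate then places $z^n$ either in $\overline{B(0,\sin\alpha)}$ when $|z^n|\leq\sin\alpha$ or in $W_\alpha$ when $|1-z^n|\leq\cos\alpha$.

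For the convexity of $f_n(D_\alpha)$, since $z\mapsto z^n$ is holomorphic and $D_\alpha$ is simply connected, the topological boundary $\partial f_n(D_\alpha)$ is contained in $f_n(\partial D_\alpha)$. This image boundary decomposes into the image of the circular arc of $\partial D_\alpha$ (lying on the circle $|w|=\sin^n\alpha$) and the images of the two straight edges, each running from the common point $w=1$ to $z_\pm^n\in\overline{B(0,\sin^n\alpha)}$. Differentiating $(1-te^{\pm i\alpha})^n$ at $t=0$ shows that these edge-images leave $w=1$ in the directions $-e^{\pm i\alpha}$, i.e., tangent to the original edges of $D_\alpha$ at the vertex with the same half-angle $\alpha$; a monotonicity check together with the fact that each edge-image terminates inside the smaller disk $\overline{B(0,\sin^n\alpha)}$ will then yield convexity of the enclosed region.

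The main obstacle is the argument estimate on the straight edges in the inclusion, namely uniform control of $\arg\sum_{k=0}^{n-1}(1-te^{\pm i\alpha})^k$ in $n\in\mathbb{N}$ and $t\in[0,\cos\alpha]$. The restriction $\alpha\leq\pi/4$ enters essentially here, since $2\alpha\leq\pi/2$ is precisely the margin that keeps the combined rotation $\arg(1-z)+\arg\sum z^k$ inside $[-\alpha,\alpha]$. Convexity of $f_n(D_\alpha)$ should then follow once these edge-curves are analyzed in sufficient detail.
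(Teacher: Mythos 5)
Your overall strategy --- factor $1-z^n=(1-z)\sum_{k=0}^{n-1}z^k$, use the maximum principle for a harmonic branch of $\arg(1-z^n)$ to push the argument estimate to $\partial D_\alpha$, then check the circular arc and the two straight edges separately --- is a legitimate and genuinely different route from the paper's, which instead argues by induction on $n$: it computes explicitly $\sup_t\mathrm{Im}\,f_2(\zeta_\pm(t))=\tfrac12\tan\alpha<\sin\alpha\cos\alpha$ and the range of $\mathrm{Re}\,f_2(\zeta_\pm(t))$ for the tangent edges $\zeta_\pm(t)=1+t e^{i(\pi\mp\alpha)}$, and then iterates. However, as written your proposal has genuine gaps rather than a complete proof. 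The estimate you yourself flag as ``the main obstacle'' --- that $\arg\sum_{k=0}^{n-1}(1-te^{\mp i\alpha})^k$ stays in $[0,2\alpha]$ (resp.\ $[-2\alpha,0]$) uniformly in $n$ and $t\in[0,\cos\alpha]$ --- is not a reduction at all: since $\sum_{k=0}^{n-1}z^k=(1-z^n)/(1-z)$, this claim is literally equivalent to the edge bound $|\arg(1-z^n)|\le\alpha$ you are trying to prove, and it is exactly where the work of the lemma lives (compare Remark \ref{max-Im}, which locates the maximizer $t_n^*$ of $\mathrm{Im}\,f_n(\zeta_+(t))$ and extracts bounds of the type $e^{-\frac{\pi}{2}\cot\alpha}$). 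Leaving it unproved leaves the lemma unproved.

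Second, even granting the argument bound, $|\arg(1-z^n)|\le\alpha$ together with $|z^n|\le 1$ does not place $z^n$ in $D_\alpha$: for $\alpha=\pi/4$ the point $w=-i$ satisfies $|\arg(1-w)|=\pi/4$ and $|w|=1$ yet lies outside $D_{\pi/4}$. So you still must establish the disjunction ``$|z^n|\le\sin\alpha$ or $|1-z^n|\le\cos\alpha$'' for every $z$ in the wedge, and this is not a single maximum-principle statement; note that $\mathrm{Re}\,f_n(\zeta_\pm(t))$ genuinely becomes negative (the paper computes the limiting value $-e^{-\pi\cot\alpha}$), so the case $|1-z^n|>\cos\alpha$ really occurs and must be caught by the disk criterion $|z^n|\le\sin\alpha$. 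Your ``parallel modulus estimate'' is named but never formulated. Finally, the convexity of $f_n(D_\alpha)$ --- which Theorem \ref{Th-main} actually needs in order to apply Proposition \ref{Kato-map} with $E'=f(D_\alpha)$ --- is dispatched with ``a monotonicity check \dots will then yield convexity''; that is an assertion, not an argument. Your correct preliminary observations (the only $n$-th root of unity in $D_\alpha$ is $z=1$; $|\arg(1-w)|\le\arcsin|w|$ handles the arc) are sound, but the three items above constitute the substance of the lemma and all three are missing.
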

\begin{lemma}\label{Euler}{\em(Euler formula)}
Let $A$ be an {$m$-sectorial} operator. Then for $t\geq 0$ one gets the strong limit
\begin{equation}\label{Eul-s-lim}
s-\lim_{n\rightarrow\infty}(F(t/n))^n = e^{-t A} \ .
\end{equation}
\end{lemma}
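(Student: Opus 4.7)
The plan is to invoke Chernoff's product formula. Set $V(s):=F(s)=(I+sA)^{-1}$ for $s\ge 0$, so that $V(0)=I$ and, by Proposition~\ref{Th-res}, each $V(s)$ is a contraction on $\mathfrak{H}$. Chernoff's theorem then reduces the desired strong limit $V(t/n)^n\to e^{-tA}$ to two ingredients: (a) $-A$ generates a $C_0$-contraction semigroup on $\mathfrak{H}$; (b) the discrete generator $(V(s)-I)/s$ converges strongly to $-A$, as $s\to 0^+$, on a core for $A$. Ingredient (a) is exactly Proposition~\ref{hol-contr} applied to our $m$-sectorial $A$ (the assumption $\alpha\le\pi/4<\pi/2$ actually gives holomorphy, but only strong continuity is needed here).

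For ingredient (b), the identity $F(s)(I+sA)u=u$ for $u\in\mathrm{dom}(A)$ yields
$$
\frac{V(s)u-u}{s}=-F(s)Au.
$$
Since $A$ is $m$-sectorial, $\mathrm{dom}(A)$ is dense in $\mathfrak{H}$ and is a core for $A$. The bound $\|F(s)v-v\|\le s\|Av\|$ for $v\in\mathrm{dom}(A)$, combined with $\|F(s)\|\le 1$ from Proposition~\ref{Th-res} and density, gives $F(s)\to I$ strongly on $\mathfrak{H}$; in particular $F(s)Au\to Au$ for every $u\in\mathrm{dom}(A)$. Hence $(V(s)-I)u/s\to -Au$ on the core $\mathrm{dom}(A)$, as required.

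The main obstacle is thus not the verification of the hypotheses but the content of Chernoff's theorem itself. Its proof rests on the elementary square-root estimate
$$
\bigl\|V(s)^n w-\exp\!\bigl(-n(I-V(s))\bigr)w\bigr\|\le\sqrt{n}\,\|(I-V(s))w\|,\qquad w\in\mathfrak{H},
$$
combined with strong convergence of the exponentials of the bounded Yosida approximants $A_s:=(I-V(s))/s=AF(s)$ to $e^{-tA}$ along $s=t/n$; this latter fact is a standard consequence of the Trotter--Kato approximation theorem applied to the strong convergence $A_s u\to Au$ on $\mathrm{dom}(A)$ established above. An alternative more in the sectorial spirit of this note would be to write $V(t/n)^n$ and $e^{-tA}$ as Dunford contour integrals of the resolvent of $A$ along a curve avoiding $S_\alpha$, and to exploit the uniform pointwise bound $|(1+\zeta/n)^{-n}-e^{-\zeta}|=O(|\zeta|^2/n)$ to obtain operator-norm convergence on $\mathrm{dom}(A)$; either route delivers the strong limit~(\ref{Eul-s-lim}).
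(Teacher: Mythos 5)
Your proof is correct. The paper itself gives essentially no argument here: it records the resolvent bound (\ref{F-contr}), notes $(-\infty,0)\subset\rho(A)$, invokes Hille--Yosida for the existence of the contraction semigroup, and then defers the convergence of the Euler formula to ``standard arguments'' in Kato and Zagrebnov. Your write-up supplies one legitimate instance of those standard arguments, namely Chernoff's product formula: you correctly verify the hypotheses ($V(s)$ contractions, the identity $(V(s)u-u)/s=-F(s)Au$ on ${\rm dom}(A)$, strong convergence $F(s)\to I$ via the bound $\|F(s)v-v\|\le s\|Av\|$ plus density and uniform boundedness, and generation of a $C_0$-contraction semigroup by $-A$). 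The classical textbook route the paper points to is usually a direct telescoping/Yosida-approximant estimate rather than Chernoff, but the two are interchangeable here and your version is self-contained, which the paper's is not. Two minor points: the contraction property $\|F(s)\|\le 1$ holds for every $m$-sectorial $A$ with semi-angle $\alpha<\pi/2$ directly from (\ref{F-contr}), so you should cite that estimate rather than Proposition \ref{Th-res}, whose quasi-sectorial conclusion is restricted to $\alpha\le\pi/4$ while Lemma \ref{Euler} is stated for a general $m$-sectorial operator; and your parenthetical alternative via Dunford integrals would need ${\rm dom}(A)$-dependent constants made explicit before it yields the strong limit, but since it is offered only as an aside this does not affect the main argument.
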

The next section is reserved for the proofs. They refine and modify some lines of reasonings
of the paper \cite{CachZag-Q-S}. This concerns, in particular, a corrected proofs of Proposition \ref{Th-res} and
Theorem \ref{Th-main} (cf. Theorem 2.1 of \cite{CachZag-Q-S}), as well as reformulations and proofs of
Propositions \ref{Kato-map} and Lemma \ref{z-n}.

\section{Proofs}\label{sec3}
\setcounter{equation}{0}
\renewcommand{\theequation}{\arabic{section}.\arabic{equation}}

\begin{proof}{\rm{(Lemma \ref{z-n})}}:\\
Let $\{z:|z|\leq \sin\alpha\}\subset D_\alpha$, then one gets $|z^n|\leq \sin\alpha$. Therefore, for the
mappings $f_n : z \mapsto z^n$ one obtains  $f_n (z) \in  D_\alpha $ for any $n \geq 1$.

Thus, it rests to check the same property only for images $f_n({\mathcal{G}}_{\alpha}), n\geq1$  of the sub-domain:
\begin{equation}\label{sub-dom}
{\mathcal{G}}_{\alpha}:=\!\{z: \!|\arg(z)|< \!(\pi/2 - \alpha) \}\cap\{z:\!|\arg(z + 1)|>(\pi - \alpha)\}\!
\subset \! D_\alpha ,
\end{equation}
see Definition \ref{alpha-domain} and Figure 1.

For $0\leq t\leq \cos\alpha$, two segments of tangent straight intervals:
$$\{\zeta_{\pm}(t)=1 + t \,e^{i (\pi \mp \alpha)}\}_{0\leq t\leq \cos\alpha} \subset \partial D_\alpha ,$$
are correspondingly \textit{upper} $\zeta_{+}(t)$ and  \textit{lower} $\zeta_{-}(t)= \overline{\zeta_{+}(t)}$
non-arc parts of the total boundary $\partial D_\alpha$; they also coincide with a part of
the boundary $\partial \mathcal{G}_{\alpha}$ connected to the vertex $z=1$.

Now we proceed by induction. Let $n=1$. Then one obviously obtain : $f_{n=1} (D_{\alpha}) = D_{\alpha}$.
For $n=2$ the boundary $\partial f_2 ({\mathcal{G}}_{\alpha})$ of domain $f_2 ({\mathcal{G}}_{\alpha})$ is a
union
$\Gamma_2 (\alpha)\cup \overline{\Gamma_2 (\alpha)}$ of the contour
$$\Gamma_2 (\alpha):= \{f_2 (\zeta_{+}(t))\}_{0\leq t\leq \cos\alpha}\cup \{z: |z|\leq \sin^2 \alpha , \
\arg (z)= (\pi - 2 \alpha)\}$$
and its conjugate $\overline{\Gamma_2 (\alpha)}$.
Since $\arg(\partial_t f_2 (\zeta_{+}(t))\leq (\pi - \alpha)$ for all $0\leq t\leq \cos\alpha$, the contour
$$\{f_2 (\zeta_{+}(t))\}_{0\leq t\leq \cos\alpha} \subseteq  \{z: |\arg(z + 1)|> (\pi - \alpha)\} ,$$
see (\ref{sub-dom}). The same is obviously true for the image of the lower branch $\zeta_{-}(t)$.
If $\alpha \leq \pi/4$,
one gets:
\begin{eqnarray}\label{Im+}
\sup_{0\leq t\leq \cos\alpha} {\rm{Im}} (f_2 (\zeta_{+}(t)))&=& {\rm{Im}} (f_2 (\zeta_{+}(t^* =
(2\cos\alpha)^{-1})))\\ &=&
\frac{1}{2} \tan \alpha < \sin\alpha \cos\alpha \,,\nonumber
\end{eqnarray}
where $t^* = (2\cos\alpha)^{-1} \leq \cos\alpha$, and
$$ 0 \geq {\rm{Re}}(f_2 (\zeta_{+}(t)))\geq -  \sin^2\alpha  \cos2\alpha \geq - \sin\alpha \,.$$
Therefore, $\{f_2 (\zeta_{+}(t))\}_{0\leq t\leq \cos\alpha} \subseteq D_\alpha$. Since the same is
also true for the image of the lower branch $\zeta_{-}(t)$, we obtain
$f_2 ({\mathcal{G}}_{\alpha})\subset D_\alpha$ and by consequence $f_{n=2} (D_\alpha)=
\{w = z\cdot z :  \ z\in D_\alpha,  \ z \in f_{n=1} (D_\alpha)\} \subset D_\alpha ,$ for $\alpha \leq \pi/4$.

Now let $n>2$ and suppose that $f_n (D_\alpha)\subset D_\alpha$. Then the image of the $(n+1)\ -$ order mapping
of domain $D_\alpha$ is:
$$f_{n+1}(D_\alpha)= \{w = z\cdot z^n :  \ z\in D_\alpha,  \ z^n \in f_n (D_\alpha)\}, $$
and since $f_n (D_\alpha)\subset D_\alpha$, we obtain $f_{n+1}(D_\alpha)\subset D_\alpha$ by the same reasoning
as for $n=2$.
\end{proof}
\begin{remark}\label{max-Im}
Let $\phi(t):= \arg(\zeta_{+}(t))$. Then $\cot(\alpha + \phi(t))= (\cos\alpha - t)/\sin\alpha$ and
\begin{equation}\label{estim-Im}
\sup_{0\leq t\leq \cos\alpha}{\rm{Im}}(f_n (\zeta_{+}(t)))\leq (1 -2t_{n}^*\cos\alpha + (t_{n}^*)^2)^{n/2}
\end{equation}
for $\sin (n\phi(t_{n}^*))=1$. In the limit $n\rightarrow\infty$ this implies that
$\phi(t_{n}^*) = \pi/2n + o(n^{-1})$, $t_{n}^* = \pi/(2n \sin\alpha)  + o(n^{-1})$ and
\begin{equation}\label{Im-lim}
\lim_{n\rightarrow\infty}\sup_{0\leq t\leq \cos\alpha}{\rm{Im}}(f_n (\zeta_{+}(t)))
\leq \exp(- \frac{1}{2}\pi \cot\alpha) < \frac{1}{2} \tan \alpha .
\end{equation}
By the same reasoning one gets the estimates similar to (\ref{estim-Im}) and (\ref{Im-lim}) for $\zeta_{-}(t))$.
Hence, $|{\rm{Im}}(f_n (\zeta_{\pm}(t)))|< {\rm{Im}}(f_{n=1}(\zeta_{+}(t)))< \sin\alpha \cos\alpha$,
cf. (\ref{Im+}).

Notice that in spite of the arc-part of the contour
$\partial{D_\alpha}$ shrinks in the limit $n\rightarrow\infty$ to zero, we obtain
\begin{equation}\label{Re-lim}
\lim_{n\rightarrow\infty}\sup_{0\leq t\leq \cos\alpha}{\rm{Re}}(f_n (\zeta_{+}(t))) =
- \exp(- \pi \cot\alpha) ,
\end{equation}
for the left extreme point of the projection on the real axe ($\sin (n\phi(t_{n}^*))=1$) of the image $f_n (D_\alpha)$.
Since $\exp(- \pi \cot\alpha)< \sin\alpha$, for $\alpha \leq \pi/4$, the arguments (\ref{Im-lim}) and (\ref{Re-lim})
bolster the conclusion of the Lemma \ref{z-n}.
\end{remark}
\begin{proof}{\rm{(Lemma \ref{Euler})}}:\\
By (\ref{F-contr}) we have for $\lambda > 0$
\begin{equation}\label{H-Y-est}
\|(\lambda I + A)^{-1}\| < \lambda^{-1} \ ,
\end{equation}
and since $A$ is $m$-sectorial, we also get that $(-\infty, 0) \subset \rho(A)$.
Then the \textit{Hille-Yosida} theory ensures the existence of the contraction semigroup
$\{e^{-t \, A}\}_{\, t \geq 0}$,
and the standards arguments (see e.g. \cite[Ch.V]{Kato}, or \cite[Ch.1.1]{Zag})
yield the convergence of the
Euler formula (\ref{Eul-s-lim}) in the strong topology.
\end{proof}
\begin{proof}{\rm{(Theorem \ref{Th-main})}}:\\
Take $f(z) = z^2$ and the compact convex set $E':=  f(D_{\alpha})\subseteq D_{\alpha}$, see Lemma \ref{z-n}.
Since the set $E:= f^{-1}(E')= D_{\alpha} \cup (-D_{\alpha})$ is \textit{convex}, its convex kernel $K$
exists and $K = E$. Then by Proposition \ref{Kato-map} we obtain that
${\mathfrak{N}}(f(C))\subseteq E'\subseteq D_\alpha$, if the numerical range ${\mathfrak{N}}(C)\subseteq K$.

Let contraction $C_1 := (I + t \,A /2)^{-1}= F(t/2)$. Since by Proposition \ref{Th-res}
for any $t\geq0$ we have ${\mathfrak{N}}(C_1)\subseteq D_{\alpha}$ and since $D_{\alpha}\subset E$,
we can choose  $K = E$. Then by the Kato numerical range mapping theorem (Proposition \ref{Kato-map}) we get:
\begin{equation}\label{1st-step}
{\mathfrak{N}}(f(C_1)= F (t/2)^2)\subseteq E'\subseteq D_{\alpha}  \ .
\end{equation}
Similarly, take the contraction $C_2 := F (t/4)^2$. Since (\ref{1st-step}) is valid for any $t\geq 0$,
it is true for $t \mapsto t/2$.  Then by definition of $K$ one has
${\mathfrak{N}}(F (t/4)^2)\subseteq D_{\alpha} \subseteq K$. Now
again the Proposition \ref{Kato-map} implies:
\begin{equation}\label{2st-step}
{\mathfrak{N}}(f(C_2)= F(t/4)^4)\subseteq E'\subseteq D_{\alpha} \ .
\end{equation}
Therefore, we obtain ${\mathfrak{N}}(F_b(t/2^n)^{2^n})\subseteq D_{\alpha}$, for any $n \in {\mathbb{N}}$.
By Lemma \ref{Euler} this yields
$$
\lim_{n\rightarrow\infty}(u,(I + t \,A /2^n)^{-2^n} u) = (u,e^{- t \, A} u) \in D_{\alpha} \ ,
$$
for any unit vector $u\in\mathfrak{H}$. Therefore, the numerical ranges of the contraction semigroup
${\mathfrak{N}}(e^{- t \, A})\subseteq D_{\alpha}$ for all $t\geq 0$, if it is generated by {$m$-sectorial}
operator with  the semi-angle $\alpha\in[0, \pi/4]$ and with the vertex at $z=0$.
\end{proof}

\section{Corollaries and Applications}
\setcounter{equation}{0}
\renewcommand{\theequation}{\arabic{section}.\arabic{equation}}

1. Notice that Definition \ref{Q-S-cont} of \textit{quasi-sectorial} contractions $C$ is quite
{restrictive} comparing to the notion of \textit{general} contractions, which demands \textit{only}
${\mathfrak{N}}(C)\subseteq \mathfrak{D}_1$. For the latter case one has a well-known
\textit{Chernoff lemma} \cite{Chern-1}:
\begin{equation}\label{Chern-Lemm}
\|(C^n - e^{n(C-I)})u\| \leq n^{1/2}\|(C-I)u\| \ , \ u\in\mathfrak{H} \ , \ n \in \mathbb{N} \ ,
\end{equation}
which is \textit{not} even a convergent bound.
For quasi-sectorial contractions we can obtain a much stronger estimate \cite{CachZag-Q-S}:
\begin{equation}\label{n-1/3}
\left\|C^n - e^{n(C-I)}\right\| \leq {M \ n^{-1/3}} \ \  , \ \ n \in \mathbb{N} \ ,
\end{equation}
convergent to zero in the uniform topology when $n \rightarrow \infty$.
Notice that the rate of convergence $n^{-1/3}$ obtained in \cite{CachZag-Q-S} with
help of the \textit{Poisson representation} and the
\textit{Tchebychev inequality} is not optimal. In \cite{Paul},
\cite{Cach} and \cite{BentPaul} this estimate was improved up to the
\textit{optimal} rate $O(n^{-1})$, which one can easily verify for a particular
case of self-adjoint contractions (i.e. $\alpha=0$) with help of the  spectral
representation.

The inequality (\ref{n-1/3}) and its further improvements are based on the following
important result about the upper bound estimate for the case of \textit{quasi-sectorial} contractions:
\begin{proposition}\label{C-n-estim}
If $C$ is a quasi-sectorial contraction on a Hilbert space $\mathfrak{H}$ with semi-angle
$0\leq\alpha < \pi/2$, i.e. the numerical range
${\mathfrak{N}}(C)$ is a subset of the domain $D_\alpha$, then
\begin{equation}\label{C1}
\|C^n (I-C)\|\leq \frac{K}{n+1} \ , \ n\in{\mathbb{N}} \ .
\end{equation}
\end{proposition}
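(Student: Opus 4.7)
The plan is to realise $C^n(I-C)$ as a Riesz--Dunford contour integral and to reduce the operator bound to a scalar estimate governed by Proposition \ref{prop-n-ran}(ii). Since $\sigma(C)\subseteq \overline{{\mathfrak{N}}(C)}\subseteq D_\alpha$, I would pick a piecewise-smooth closed contour $\Gamma$ encircling $D_\alpha$ and lying in $\rho(C)$, and write
$$
C^n(I-C)=\frac{1}{2\pi i}\oint_{\Gamma} z^n(1-z)\,(zI-C)^{-1}\,dz.
$$
Proposition \ref{prop-n-ran}(ii) supplies $\|(zI-C)^{-1}\|\leq 1/{\rm{dist}}(z,D_\alpha)$ for $z\in\Gamma$, so the task reduces to bounding the purely scalar line integral of $|z^n(1-z)|/{\rm{dist}}(z,D_\alpha)$.

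For the contour I would take the boundary of a slightly enlarged domain $D_{\alpha'}$ with a fixed $\alpha'\in(\alpha,\pi/2)$, rounded off by a small semicircle of radius $\epsilon>0$ about the common vertex $z=1$; letting $\epsilon\to 0$ afterwards is safe because there $|1-z|\sim\epsilon$, $\|(zI-C)^{-1}\|\leq 1/\epsilon$ and $|dz|\sim\epsilon\,d\theta$, so the rounding contributes $O(\epsilon)$. The remaining contour splits into the circular arc of radius $\sin\alpha'$, where $|z|^n=(\sin\alpha')^n$ decays exponentially, and the two tangent segments $z=1-t e^{\pm i\alpha'}$, $0\leq t\leq\cos\alpha'$. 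On each segment the key facts are: (i) $|z|^2=1-2t\cos\alpha'+t^2\leq 1-t\cos\alpha'$, so $|z|^n\leq e^{-nt\cos\alpha'/2}$; (ii) $|1-z|=t$; and (iii) ${\rm{dist}}(z,D_\alpha)\geq t\sin(\alpha'-\alpha)$, coming from the angular gap between the two tangent boundaries at the common vertex. The integrand is therefore bounded by $e^{-nt\cos\alpha'/2}/\sin(\alpha'-\alpha)$, which integrates to $O(1/n)$ and yields $\|C^n(I-C)\|\leq K/(n+1)$ with $K=K(\alpha)$.

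The main obstacle is precisely the vertex at $z=1$: both the weight $|1-z|$ and the distance ${\rm{dist}}(z,D_\alpha)$ collapse to zero simultaneously, so neither a uniform tubular neighborhood of $D_\alpha$ nor a pointwise supremum bound alone yields the optimal rate. The saving observation is that the linear simultaneous vanishing of these two quantities cancels, keeping their ratio bounded by $1/\sin(\alpha'-\alpha)$, while the exponential factor $|z|^n$ along the straight edges emanating from the vertex supplies the decisive $1/n$ gain upon integration. As a shorter but less elementary alternative one could invoke Crouzeix's inequality $\|f(C)\|\leq 11.08\,\sup_{z\in\overline{{\mathfrak{N}}(C)}}|f(z)|$ with $f(z)=z^n(1-z)$, whereupon the whole proposition reduces to the scalar corner estimate $\max_{z\in D_\alpha}|z^n(1-z)|=O(1/n)$, which in turn follows from the same tangent-segment optimization used above.
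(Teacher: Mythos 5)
Your proposal is correct and follows essentially the same route as the proof the paper relies on (Lemma 3.1 of \cite{CachZag-Q-S}): a Riesz--Dunford integral of $z^n(1-z)$ over a contour of the type $\Gamma_r$ shown in Figure 1 (two tangent segments emanating from $z=1$ plus an arc of radius $r=\sin\beta>\sin\alpha$, i.e.\ your $\partial D_{\alpha'}$), with the resolvent controlled by $1/\mathrm{dist}(z,\overline{{\mathfrak{N}}(C)})$ and the vertex degeneracy cancelled by the factor $|1-z|$. The key estimates you isolate --- exponential decay of $|z|^n$ along the tangent segments and the ratio bound $|1-z|/\mathrm{dist}(z,D_\alpha)\leq 1/\sin(\alpha'-\alpha)$ --- are exactly the ones that drive that argument.
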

\noindent For the proof see Lemma 3.1 of \cite{CachZag-Q-S}.

2. Another application of quasi-sectorial contractions generalizes the Chernoff semigroup approximation
theory \cite{Chern-1}, \cite{Chern-2} to the operator-norm approximations \cite{CachZag-Q-S}.
\begin{proposition}\label{norm-appr}
Let $\{\Phi(s)\}_{s\geq 0}$ be a family of uniformly
quasi-sectorial contractions on a Hilbert space $\mathfrak{H}$, i.e. such that there exists
$0<\alpha<\pi/2$ and  ${\mathfrak{N}}(\Phi(s)) \subseteq D_\alpha$, for all $s\geq 0$. Let
$$
X(s):=(I-\Phi(s))/s  \ ,
$$
and let $X_0$ be a closed operator with non-empty resolvent set, defined in a closed subspace
${\mathfrak{H}}_0 \subseteq \mathfrak{H}$. Then the family $\{X(s)\}_{s>0}$ converges,
when $s\rightarrow +0$, in the
uniform resolvent sense to the operator $X_0$  if and only if
\begin{equation}\label{lim}
\lim_{n\rightarrow \infty} \left\|\Phi(t/n)^n -e^{-tX_0}P_0\right\| = 0 \ , \ \ \ \mbox{for} \ t>0 \ .
\end{equation}
Here $P_0$ denotes the orthogonal projection onto the subspace ${\mathfrak{H}}_0$.
\end{proposition}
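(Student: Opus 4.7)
The plan is to split the equivalence into two pieces by means of the quasi-sectorial Chernoff-type estimate (\ref{n-1/3}), thereby reducing the problem to the classical correspondence between uniform resolvent convergence and norm convergence of the associated holomorphic contraction semigroups. First one observes that each $\Phi(s)$ being quasi-sectorial with fixed semi-angle $\alpha<\pi/2$ implies $I-\Phi(s)$ is $m$-sectorial (paragraph after Definition \ref{Q-S-cont}), and positive rescaling preserves the sector (Remark \ref{rem-1-2}); hence $X(s)=(I-\Phi(s))/s$ is $m$-sectorial with the same $\alpha$ and vertex at $z=0$. By Proposition \ref{hol-contr} it therefore generates a contraction semigroup $\{e^{-tX(s)}\}_{t\geq 0}$ holomorphic in the fixed sector $S_{\pi/2-\alpha}$, and by Proposition \ref{prop-n-ran}(ii) the resolvents $(\zeta I-X(s))^{-1}$ admit a uniform (in $s$) norm bound on any contour staying at fixed positive distance from $S_\alpha$.

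Applying the estimate (\ref{n-1/3}) to $C:=\Phi(t/n)$, and using the identity $n(\Phi(t/n)-I)=-tX(t/n)$, yields
\[
\|\Phi(t/n)^n - e^{-tX(t/n)}\|\leq M(\alpha)\,n^{-1/3},
\]
with $M(\alpha)$ independent of $n$ and of $t$ on bounded intervals. Consequently (\ref{lim}) is equivalent to $\|e^{-tX(s)}-e^{-tX_0}P_0\|\to 0$ as $s=t/n\to 0^+$, and it remains to show that this norm semigroup convergence is in turn equivalent to uniform resolvent convergence $X(s)\to X_0$ on a non-empty subset of $\rho(X_0)$.

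For the direction from resolvent to semigroup I would use the Dunford--Taylor representation
\[
e^{-tX(s)}=\frac{1}{2\pi i}\int_{\Gamma}e^{-t\zeta}(\zeta I-X(s))^{-1}\,d\zeta,
\]
along a contour $\Gamma$ encircling $S_\alpha$ at fixed positive distance from its boundary: pointwise norm convergence of the integrand at each $\zeta$, combined with the uniform resolvent majorant, yields the semigroup limit by dominated convergence. For the converse, the Laplace representation $(\lambda I+X(s))^{-1}=\int_0^\infty e^{-\lambda t}e^{-tX(s)}\,dt$ valid for $\mathrm{Re}\,\lambda>0$, together with the uniform bound $\|e^{-tX(s)}\|\leq 1$, transfers norm semigroup convergence to norm resolvent convergence.

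The main obstacle is the degenerate component on $\mathfrak{H}_0^\perp$: when $\mathfrak{H}_0\neq \mathfrak{H}$, both $e^{-tX(s)}$ and $(\lambda I+X(s))^{-1}$ must collapse in norm on $\mathfrak{H}_0^\perp$ as $s\to 0^+$, compatibly with the projector $P_0$ appearing in (\ref{lim}). Verifying this collapse, together with the uniform integrability needed to close the Dunford--Taylor argument along the unbounded portion of $\Gamma$, constitutes the technical heart of the proof; once past it, the remaining assembly of the two implications is essentially bookkeeping.
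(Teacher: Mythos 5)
The paper does not actually prove Proposition \ref{norm-appr}; it is quoted from \cite{CachZag-Q-S} (the text points the reader there, just as it points to Lemma 3.1 of \cite{CachZag-Q-S} for Proposition \ref{C-n-estim}). So there is no in-paper proof to compare against. Judged on its own, your strategy is the standard one and is essentially the strategy of the cited source: use the quasi-sectorial Chernoff-type bound to replace $\Phi(t/n)^n$ by $e^{-tX(t/n)}$ up to an $O(n^{-1/3})$ error uniform over the family (uniform because all $\Phi(s)$ share the same $\alpha$), and then shuttle between norm convergence of the holomorphic semigroups and uniform resolvent convergence via a Dunford--Taylor contour integral in one direction and a Laplace-transform identity in the other. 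Your preliminary observations ($X(s)$ is $m$-sectorial with semi-angle $\alpha$ and vertex $0$, uniform resolvent majorant off $S_\alpha$ from Proposition \ref{prop-n-ran}(ii)) are correct, and the ``degenerate component'' worry is actually the mildest of the issues: once uniform resolvent convergence is interpreted, as it must be here, as $(\lambda I+X(s))^{-1}\to(\lambda I+X_0)^{-1}P_0$, the collapse on ${\mathfrak{H}}_0^{\perp}$ is built into the hypothesis and propagates through both integral representations for $t>0$ without extra work.

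The genuine gap is the sentence ``Consequently (\ref{lim}) is equivalent to $\|e^{-tX(s)}-e^{-tX_0}P_0\|\to0$ as $s=t/n\to0^+$.'' What the estimate (\ref{n-1/3}) gives you is equivalence of (\ref{lim}) with convergence of $e^{-tX(s)}$ along the discrete sequences $s=t/n$, $n\to\infty$, for each fixed $t$; but your Laplace-transform step for the ``only if'' direction needs $e^{-tX(s)}\to e^{-tX_0}P_0$ for (almost) every $t$ as the continuous parameter $s\to0^+$, i.e. you must let $t$ and $s$ vary independently. Bridging this requires an equicontinuity argument in $t$, and that is exactly where Proposition \ref{C-n-estim} enters: the bound $\|C^n(I-C)\|\le K/(n+1)$ with $C=\Phi(s)$ gives $\|\Phi(s)^{k+1}-\Phi(s)^k\|\le K/(k+1)$, hence a logarithmic modulus of continuity of $k\mapsto\Phi(s)^k$ (equivalently of $t\mapsto e^{-tX(s)}$ via $\|X(s)e^{-\tau X(s)}\|\le C/\tau$ from uniform sectoriality), which upgrades the pointwise-in-$t$ limit (\ref{lim}) to a locally uniform one and closes the discrete-to-continuous passage. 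This is more than bookkeeping --- it is the reason the quasi-sectorial hypothesis, rather than mere contractivity, is needed in both directions --- and your sketch should be amended to invoke (\ref{C1}) explicitly at this point. With that insertion, and with the routine verification of absolute convergence of the Dunford--Taylor integral along a contour $\partial S_{\alpha+\epsilon}$ (where $|e^{-t\zeta}|$ decays since $\alpha+\epsilon<\pi/2$), the argument can be completed.
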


3. We conclude by application of Theorem \ref{Th-main} and Proposition \ref{C-n-estim}
to the Euler formula \cite{CachZag-Q-S}, \cite{Paul}, \cite{Cach}.
\begin{proposition}\label{norm-appr-Euler}
If $A$ is an $m$-sectorial operator in a Hilbert space
${\mathfrak{H}}$, with semi-angle $\alpha\in [0,\pi/4]$ and with vertex at $z=0$, then
$$
\lim_{n\rightarrow\infty}\left\|(I+tA/n)^{-n} - e^{-tA}\right\| =
0,\ t\in S_{\pi/2-\alpha}.
$$
Moreover, uniformly in $t\geq t_0 > 0$ one has the error estimate:
$$
\left\|(I+tA/n)^{-n} - e^{-tA}\right\| \leq
O\left(n^{-1}\right) \ , \ n\in{\mathbb{N}} \ .
$$
\end{proposition}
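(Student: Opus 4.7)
The plan is to apply Proposition \ref{norm-appr} combined with the sharpened Chernoff-type rate to the resolvent family $F(s) = (I+sA)^{-1}$. With the choice $\Phi(s) := F(s)$, $X_0 := A$, $\mathfrak{H}_0 := \mathfrak{H}$ (so $P_0 = I$), Proposition \ref{Th-res} ensures that $\mathfrak{N}(F(s)) \subseteq D_\alpha$ uniformly in $s\geq 0$, meeting the hypothesis of Proposition \ref{norm-appr} since $\alpha \leq \pi/4 < \pi/2$. The operator $X(s) = (I - F(s))/s = A F(s)$ is the Yosida approximation of $A$, whose uniform-resolvent convergence to $A$ as $s \to +0$ follows directly from the second resolvent identity combined with (\ref{estres}) and $\|F(s)\| \leq 1$, giving $\|(\lambda I + X(s))^{-1} - (\lambda I + A)^{-1}\| = O(s)$ on, say, the negative real axis. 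Proposition \ref{norm-appr} then yields the qualitative limit $\|F(t/n)^n - e^{-tA}\| \to 0$ for real $t > 0$.

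For the rate, I would use the improvement of (\ref{n-1/3}) indicated in the paragraph after Proposition \ref{C-n-estim}, namely
\[
\|C^n - e^{n(C-I)}\| \leq M/n
\]
for quasi-sectorial contractions $C$, whose proof proceeds via a standard telescoping identity coupled with the key bound $\|C^k(I-C)\| \leq K/(k+1)$ from Proposition \ref{C-n-estim}. Applied to $C := F(t/n)$, and using $n(F(t/n) - I) = -t A_{t/n}$ with $A_{t/n} := A F(t/n)$ bounded, this gives $\|F(t/n)^n - e^{-t A_{t/n}}\| = O(1/n)$. One then compares $e^{-t A_{t/n}}$ with $e^{-tA}$: since both $A$ and $A_{t/n}$ are functions of $A$ and hence commute, the difference admits a Duhamel or Dunford-contour representation driven by the resolvent error $O(1/n)$, and the holomorphic-semigroup smoothing $\|A e^{-uA}\| \leq C/u$ afforded by Proposition \ref{hol-contr} absorbs the $u$-growth to yield $O(1/n)$ uniformly in $t \geq t_0 > 0$.

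The complex case $t \in S_{\pi/2-\alpha}$ in the qualitative statement is reached by a Vitali-type argument: both $t \mapsto F(t/n)^n$ and $t \mapsto e^{-tA}$ are holomorphic on this sector (the latter by Proposition \ref{hol-contr}(i), the former because $(I+tA/n)^{-1}$ exists by Proposition \ref{prop-n-ran}(ii) since the numerical range of $tA$ stays inside a sector strictly smaller than a half-plane for $t \in S_{\pi/2-\alpha}$) and are uniformly norm-bounded on compact subsets, so the norm convergence on $(0,\infty)$ established above propagates to all of $S_{\pi/2-\alpha}$. The main obstacle I anticipate is precisely the second step --- squeezing a genuine $O(n^{-1})$ out of $\|e^{-t A_{t/n}} - e^{-tA}\|$ uniformly for $t \geq t_0$ --- which relies essentially on the holomorphic functional calculus of $m$-sectorial operators together with the $O(s)$ Yosida resolvent rate, and not merely on the strong-convergence input of Lemma \ref{Euler}.
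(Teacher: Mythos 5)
The paper itself offers no written proof of Proposition \ref{norm-appr-Euler}: it is presented as a corollary of Theorem \ref{Th-main} and Proposition \ref{C-n-estim}, with the actual arguments delegated to \cite{CachZag-Q-S}, \cite{Paul} and \cite{Cach}. Your architecture --- verify that $\Phi(s)=F(s)$ is uniformly quasi-sectorial via Proposition \ref{Th-res}, obtain the qualitative limit from Proposition \ref{norm-appr} through the norm-resolvent convergence of the Yosida approximants $X(s)=AF(s)$, then split $F(t/n)^n-e^{-tA}$ into $\bigl(F(t/n)^n-e^{-tA_{t/n}}\bigr)+\bigl(e^{-tA_{t/n}}-e^{-tA}\bigr)$ for the rate --- is exactly the route those references take, so the skeleton is sound. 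Two of your supporting claims are also essentially fine modulo details: the $O(s)$ norm-resolvent rate for the Yosida approximation does hold, but not by the one-line second-resolvent-identity argument you indicate (that gives only $O(1)$ because $A-A_s=sA^2F(s)$ loses a factor of $A$); one needs the algebraic identity $(\lambda+A_s)^{-1}=(1+\lambda s)^{-2}(\mu+A)^{-1}+s(1+\lambda s)^{-1}I$ with $\mu=\lambda/(1+\lambda s)$, after which everything is visibly $O(s)$. The Vitali extension to $t\in S_{\pi/2-\alpha}$ is legitimate once you note, as you do, that $\mathfrak{N}(tA)$ stays in the closed right half-plane there, so the approximants remain uniformly bounded contractions.

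The genuine gap is in your first quantitative step. You assert that $\|C^n-e^{n(C-I)}\|\leq M/n$ for quasi-sectorial $C$ follows from ``a standard telescoping identity coupled with'' the bound $\|C^k(I-C)\|\leq K/(k+1)$ of Proposition \ref{C-n-estim}. The paper explicitly warns against this: the direct argument from Proposition \ref{C-n-estim} via the Poisson representation $e^{n(C-I)}=e^{-n}\sum_k n^k C^k/k!$ and Tchebychev's inequality yields only the non-optimal rate $n^{-1/3}$ of (\ref{n-1/3}) (essentially because $\|C^m(I-C^{j})\|\leq\sum_{i<j}K/(m+i+1)$ produces logarithms that must be traded off against the Poisson concentration width $\sqrt{n}$). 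The passage from $n^{-1/3}$ to the optimal $O(n^{-1})$ is precisely the nontrivial contribution of \cite{Paul}, \cite{Cach} and \cite{BentPaul}, and cannot be reproduced by the telescoping you describe; if you want a self-contained proof of the stated $O(n^{-1})$ bound you must either import one of those refinements or be content with $O(n^{-1/3})$ from the tools available in this paper. A smaller point: Proposition \ref{norm-appr} is stated for $0<\alpha<\pi/2$, so for $\alpha=0$ you should first enlarge $D_0$ to some $D_{\alpha'}$ with $0<\alpha'\leq\pi/4$ before invoking it.
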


\section*{Acknowledgements}
I would like to thank Professor Mitsuru Uchiyama for a useful remark indicating a
flaw in our arguments in Section 2 of \cite{CachZag-Q-S} , revision of
this part of the paper \cite{CachZag-Q-S} is done in the present manuscript.
I also thankful to Vincent Cachia and Hagen Neidhardt for a pleasant collaboration.


\begin{figure}[H]
\epsfysize=10cm

$$\epsfbox{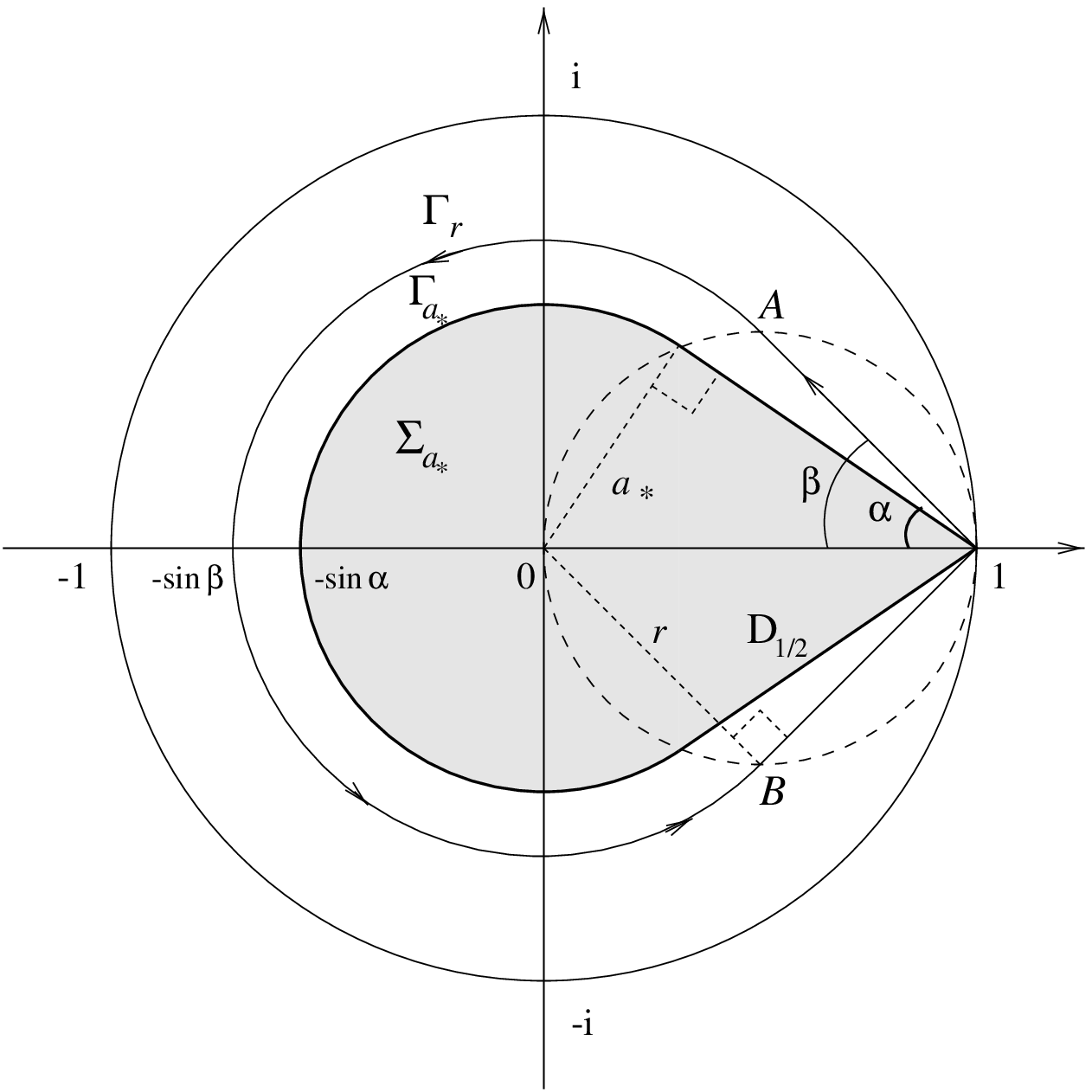}$$
\caption{Illustration of the set $D_{\alpha}$($= \Sigma_{a_*}$ shaded domain) with boundary 
$\partial D_{\alpha} = \Gamma_{a_*}$, where $a_* = \sin \alpha$, as well as of our choice of the contour 
$\Gamma_r$ in the resolvent set $\rho(C)$, where $r=\sin \beta >a_*$. The contour $\Gamma_r$ consists of two 
segments of tangent straight lines $(1,A)$ and $(1,B)$ and the arc $(A,B)$ of radius $r$. 
The dotted circle $\partial \mbox{D}_{r=1/2}$ corresponds to the set of tangent points for different 
values of $\alpha\in[0,\pi/2]$.}

\end{figure}

\end{document}